\newtheorem{theorem}{Theorem}
\newtheorem{lemma}[theorem]{Lemma}
\newtheorem{proposition}[theorem]{Proposition}
\newenvironment{proof}[1][Proof]{\noindent\textbf{#1.} }{\ \rule{0.5em}{0.5em}}
\begin{document}

\title{Variations and extensions of the Gaussian concentration inequality,
Part I}
\author{Daniel J. Fresen\thanks{%
University of Pretoria, Department of Mathematics and Applied Mathematics,
daniel.fresen@up.ac.za. 2010 \textit{MSC}:
60E05, 60E15, 52A27, 52A30. \textit{Keywords}: Gaussian concentration inequality,
heavy tailed random variables.}}
\date{}
\maketitle

\begin{abstract}
The classical Gaussian concentration inequality for Lipschitz functions is
adapted to a setting where the
classical assumptions (i.e. Lipschitz and Gaussian) are not met. The theory
is more direct than much of the existing theory designed to handle related
generalizations. An application is presented to linear combinations
of heavy tailed random variables.
\end{abstract}

\section{Introduction}

Recall the Gaussian concentration inequality in one of its most classical
forms: if $\psi:\mathbb{R}^{n}\rightarrow \mathbb{R}$ is a Lipschitz function, and $Z$ is
a random vector in $\mathbb{R}^{n}$ with the standard normal distribution,
then for all $t>0$,%
\begin{equation}
\mathbb{P}\left\{ \left\vert \psi\left( Z\right) -\mathbb{M}\psi\left( Z\right)
\right\vert >t\right\} \leq C\exp \left( \frac{-ct^{2}}{Lip\left( \psi\right)
^{2}}\right)  \label{first eq Gaussian}
\end{equation}%
where $C,c>0$ are universal constants and $Lip\left( \psi\right)$ is the Lipschitz constant of $\psi$. $\mathbb{M}\psi\left( Z\right) $ denotes
the median of $\psi\left( Z\right) $, and can be replaced with the mean $%
\mathbb{E}\psi\left( Z\right) $. It follows from the Gaussian isoperimetric
inequality of \cite{SuTs} and \cite{Bor75} that
this can be improved to%
\begin{equation*}
\mathbb{P}\left\{ \psi\left( Z\right) -\mathbb{M}\psi\left( Z\right) >t\right\}
\leq 1-\Phi \left( \frac{t}{Lip\left( \psi\right) }\right)
\end{equation*}%
where $\Phi $ is the standard normal cumulative distribution. Equality
clearly holds when $\psi$ is linear.

Assuming for simplicity that $\psi$ is $C^{1}$%
, it follows from a result of Pisier \cite[Theorem 2.2 p176]{Pis0} that if $%
Y $ is another random vector in $\mathbb{R}^{n}$ with the standard normal
distribution, independent of $Z$, then for any convex function $\varphi :%
\mathbb{R}\rightarrow \mathbb{R}$,%
\begin{equation*}
\mathbb{E}\varphi \left( \psi\left( Z\right) -\psi\left( Y\right) \right) \leq 
\mathbb{E}\varphi \left( \frac{\pi }{2}\left\vert \nabla \psi\left( Z\right)
\right\vert Z'\right)
\end{equation*}%
where $Z'$ has the standard normal distribution in $\mathbb{R}$ and is
independent of $Z$. This often implies that one can compare the tails of $%
\psi\left( Z\right) -\psi\left( Y\right) $ to that of $\frac{\pi }{2}\left\vert
\nabla \psi\left( Z\right) \right\vert Z'$, which in turn leads to a bound on
the tails of $\psi\left( Z\right) -\mathbb{M}\psi\left( Z\right) $.

Pisier's
version of Gaussian concentration has three advantages over the classical
version. The first is that the proof (with input from Maurey) is
quite simple. The second is that the Lipschitz condition has been
removed. The third is that the resulting concentration inequality is determined by the distribution of $\left\vert \nabla \psi\left( Z\right)
\right\vert $ rather than $Lip(\psi)=\sup\left\vert \nabla \psi\left( \cdot\right)
\right\vert$. This is really the way things should be, considering that the Lipschitz constant of a function might be determined by behaviour on sets of small measure that have little effect on concentration properties of $\psi(Z)$. This can be seen in the examples
\[
u(x)=\left\vert x \right\vert+\max\left\{0,\varepsilon-\frac{\left\vert x \right\vert}{\varepsilon}\right\} \hspace{2.5cm}v(x)=\left\vert x \right\vert+\frac{\varepsilon \sqrt{\left\vert x \right\vert}}{1+\sqrt{\left\vert x \right\vert}}
\]
for $x\in\mathbb{R}^n$ and $\varepsilon >0$.

In two papers, 'Variations and extensions of the Gaussian concentration
inequality' Part I (here) and Part II (to appear elsewhere), we study
ways in which to apply the above inequalities in settings where they have not previously been applied because they appear to be inapplicable. The result is that the Gaussian concentration
inequality gives rise to various inequalities that seem to have
nothing to do with the normal distribution. This paper, Part I, focuses more
on the classical version, while Part II focuses on Pisier's version. The
theory is from scratch, in that no background is needed, other than\ a basic
knowledge of mathematics (the one thing that we don't prove is
the Gaussian concentration inequality itself). Our opinion is that it is
more direct than much of the modern theory of concentration of measure,
including for example the theory of Poincar\'{e} and log-Sobolev
inequalities. We refer the reader to \cite{BLMass, Ledx} for information on the
concentration of measure phenomenon.

In Section \ref{methaa} we present the main ideas of the paper. In Section %
\ref{heavy tail}, as an illustration of these ideas, we prove a concentration inequality for
linear combinations of heavy tailed random variables:
if $0<q\leq 1$ and $\left( X_{i}\right)_1^n$ is a sequence of independent symmetric random variables with
$\mathbb{P}\left\{ \left\vert X_{i}\right\vert >t\right\} = \exp \left(
-t^{q}\right) $ for all $t>0$, then
\begin{equation}
\mathbb{P}\left\{ \left\vert \sum_{i=1}^{n}a_iX_{i}\right\vert >t\right\} \leq
2\exp \left( -\min \left\{C^{-1/q}q^{2/q} \left( \frac{t}{\left\vert a\right\vert }%
\right) ^{2},C^{-1}\left( \frac{t}{\left\vert a\right\vert _{\infty }}\right)
^{q}\right\} \right)\label{dev est q}
\end{equation}%
(to say that $X_i$ is symmetric means that $X_i$ and $-X_i$ have the same distribution). This ties in naturally with known results in the case $1\leq
q<\infty $ since the dual of $\ell _{q}^{n}$ is isometric to $\ell _{\infty
}^{n}$ when $0<q\leq 1$. Considering that
\begin{equation*}
C_1^{1/q}q^{-2/q}\left\vert a\right\vert^2 \leq \mathrm{Var}\left( \sum_{i=1}^{n}a_iX_{i} \right)\leq C_2^{1/q}q^{-2/q}\left\vert a\right\vert^2
\end{equation*}%
we see that the coefficient $C^{-1/q}q^{2/q}$ is sharp up to the value of $C$. By applying symmetrization and contraction techniques it can be applied
in the non-symmetric case as long as $\mathbb{E}X_i=0$ and $\mathbb{P}\left\{ \left\vert X_{i}\right\vert >t\right\} \leq C\exp \left(
-t^{q}\right)$, with $C^{-1/q}q^{2/q}$ replaced by a different quantity. This follows work of Hitczenko, Montgomery-Smith and Oleszkiewicz \cite{HitMont1997} who estimated the $p^{th}$ moment of $\left\vert\sum a_i X_i\right\vert$. The authors of \cite{HitMont1997} presented a significantly weaker result:
\[
\lim_{t\rightarrow\infty }\log _{t}\ln \left[\mathbb{P}\left\{ \left\vert
\sum_{i=1}^{n}a_iX_{i}\right\vert >t\right\}\right]^{-1} =q.
\]
See Cor. 6.5 in their paper. The case of equal coefficients, which avoids the main obstacle, is considered in \cite{ALPT2012}, see their Eq. (3.6). A similar inequality appears in \cite{KlSo2012} (see their Sec. 4) but does not include the sub-Gaussian part. The classical case $q\geq 1$ is covered in \cite{BLMass} (their Ex. 2.27 p.50) but we did not see any mention of the case $q\in(0,1)$. Of course in the vast array of techniques available there are undoubtedly several ways to prove (\ref{dev est q}), we present two, and the result may be folklore.  More applications, mostly non-linear, are given in \cite{Fre, Fre Lor} and will appear in future papers.

\subsection*{Comments and notation}

$\mathbb{M}$ denotes median, and $C$, $c$, $%
C^{\prime }$ etc. usually denote positive
universal constants that may represent different values at each appearance.
Dependence on variables will usually be indicated by subscripts, $C_{q}$, $%
c_{q}$ etc. The Euclidean norm of a vector $a\in \mathbb{R}^n$ is denoted $\left\vert a\right\vert$ while $\left\vert a \right\vert _{p}=\left( \sum_1^n \left\vert a_i \right\vert^p \right)^{1/p} $ is its $\ell _p^n$-norm $(1\leq p <\infty)$, and $\left\vert a \right\vert _{\infty}=\mathrm{max}\{\left\vert a_i\right\vert \} $. Set-builder notation such as $\left\{x\in\mathbb{R}^n:\psi(x)\leq R\right\}$ will at times be simplified to $\left\{\psi\leq R\right\}$. The term 'random variable' will be used exclusively for real
valued random variables. Let $\gamma _{n}$ denote the standard
Gaussian measure on $\mathbb{R}^{n}$ with density
\[
\frac{d\gamma _{n}}{dx}=(2\pi
)^{-n/2}\exp \left( -\left\vert x\right\vert ^{2}/2\right).
\]
Let $\Phi $ denote the standard normal cumulative distribution and $\phi
=\Phi ^{\prime }$ the standard normal density. The expression $Z\sim N(0,I_n)$ means that the random vector $Z$ follows the standard normal distribution on $\mathbb{R}^n$ (with mean $0$ and identity covariance matrix $I_n$). The 'Gaussian' in 'Gaussian concentration' refers to $Z$, not $\psi\left( Z\right) $,
and includes the case where $\psi\left( Z\right) $ has thicker-than-Gaussian tails, which may occur when $\psi$ grows more
rapidly than linear. The $C^1$ condition that appears from time to time is stronger than necessary; as long as $f$ is locally Lipschitz it will be differentiable almost everywhere, and we will often disregard certain points of non-differentiability without explicitly commenting on the matter. The sigma algebra on $\mathbb{R}^n$ is always assumed to be the Borel sigma algebra, so \textit{measurable} means \textit{Borel measurable}. The following lemma will be used implicitly.

\begin{lemma}
\label{lil alg}If $f,g:\left[ 0,\infty \right) \rightarrow \left[ 0,\infty
\right) $ are continuous strictly increasing functions with $f\left(
0\right) =g\left( 0\right) =0$, $t\in \left[ 0,\infty \right) $ and $s=\max
\left\{ f\left( t\right) ,g\left( t\right) \right\} $ then $t=\min \left\{
f^{-1}\left( s\right) ,g^{-1}\left( s\right) \right\} $.
\end{lemma}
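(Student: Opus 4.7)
The plan is to argue by symmetry on which of $f(t),g(t)$ achieves the maximum. Without loss of generality assume $s=f(t)\ge g(t)$; the case $s=g(t)$ is identical with the roles of $f$ and $g$ swapped. Since $f$ is strictly increasing and continuous on $[0,\infty)$ with $f(0)=0$, it is a bijection onto its range, and the inverse $f^{-1}$ is unambiguously defined on this range, with $f^{-1}(f(t))=t$. This immediately gives $f^{-1}(s)=t$.

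Next I would handle $g^{-1}(s)$. Because $g$ is strictly increasing, so is $g^{-1}$ on the range of $g$, and the hypothesis $s\ge g(t)$ gives $g^{-1}(s)\ge g^{-1}(g(t))=t$ whenever $s$ lies in the range of $g$. If $s$ happens to lie outside the range of $g$ (which can occur only if $g$ is bounded and $s$ exceeds its supremum), the natural convention $g^{-1}(s)=\infty$ still yields $g^{-1}(s)\ge t$. Combining, $\min\{f^{-1}(s),g^{-1}(s)\}=\min\{t,g^{-1}(s)\}=t$, which is the claim.

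There is no real obstacle here: the lemma is purely a restatement of the fact that a strictly increasing function and its inverse are order-preserving, and that the maximum of two values corresponds under inversion to the minimum of the two preimages. The only mild care needed is the range/convention issue in the previous paragraph, which I would dispose of in a single line.
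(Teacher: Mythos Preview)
Your argument is correct. The paper does not actually supply a proof of this lemma; it merely states it with the remark that it ``will often be used implicitly,'' so there is nothing to compare against beyond noting that your case-split and monotonicity argument is exactly the elementary verification the author is taking for granted.
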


\section{\label{methaa}The methodology}

\subsection{\label{intro Gauss conc}Gaussian concentration for non-Lipschitz functions}

In this Section we discuss an alternative technique for handling the case when $\psi$ is non-Lipschitz, or when the Lipschitz constant of $\psi$ is much larger than the average local Lipschitz constant, so (\ref{first eq Gaussian}) either does not apply or becomes crude. The technique is outlined as follows: modify $\psi$ on a set of small probability on which it behaves badly so that the modified function $\psi^*$ is Lipschitz, apply classical Gaussian concentration to $\psi^*$, and then transfer the result for $\psi^*$ back to $\psi$. We now describe this in more detail.

Consider the setting where $Z$ has the
standard normal distribution in $\mathbb{R}^{n}$ and $\psi:\mathbb{R}%
^{n}\rightarrow \mathbb{R}$ is, say, $C^{1}$, but is not assumed to be
Lipschitz. Restrict $\psi$ to some set $E$ such that $Z\in E$ with high
probability and $\left\vert \nabla \psi\right\vert $ is bounded nicely on $E$.
The set $E=\left\{ x\in \mathbb{R}^{n}:\left\vert \nabla \psi\left( x\right)
\right\vert \leq R\right\} $ is often a natural choice (with appropriate $R>0$). Assuming that $E$ is convex (we discuss the non-convex case next), we have $Lip\left( \psi|_{E}\right) \leq  \sup_{E}\left\vert
\nabla \psi\right\vert $. One can then extend the
restriction $\psi|_{E}$ to the entire space $\mathbb{R}^{n}$ so that the
extension $\psi^{\ast }$ obeys $Lip\left( \psi^{\ast }\right) =Lip\left( \psi|_{E}\right) \leq 
R$. By
applying classical Gaussian concentration of Lipschitz functions as in (\ref{first eq Gaussian}) to $\psi^{\ast
}$ and observing that $\mathbb{P}\left\{ \psi\left( Z\right) =\psi^{\ast }\left(
Z\right) \right\} \geq \mathbb{P}\left\{ Z\in E\right\} $, we may transfer
the concentration inequality for $\psi^{\ast }\left( Z\right) $ about $\mathbb{M%
}\psi^{\ast }\left( Z\right) $ to an inequality for $\psi\left( Z\right) $ about $%
\mathbb{M}\psi\left( Z\right) $.

The idea of proving concentration inequalities by modifying a function on a
set where it behaves badly is not new. The technique based on restriction involving $\left\vert \nabla
\psi\right\vert $ and Lipschitz extension is found in \cite[Lemma 2.2]{ASY}, \cite[Section 3]{Adam15}, \cite[Section 6]{BNT} and \cite[Section 5]{MeSz}, and is related to an observation contained in unpublished
lecture notes that we prepared and distributed at Yale in 2012 and 2014. In these papers one assumes that $E$ is convex, or at least that $Lip\left( \psi|_{E}\right) \leq  \sup_{E}\left\vert
\nabla \psi\right\vert $ (which is not necessarily the case when $E$ is not convex). In \cite[Corollary%
]{Gra} and \cite[Section 3]{Vu} a related procedure is also used, but in a slightly different setting where the measure is supported on $\left[
0,1\right] ^{n}$ and the Lipschitz constant is taken with respect to the
Hamming distance, see the bottom of p. 264 there.

\subsection{\label{non-convex}When $E=\{\left\vert\nabla \psi\right\vert\leq R\}$ is not convex}

Without convexity one cannot apply the bound $Lip(\psi|_E)\leq \sup_E \left\vert\nabla \psi\right\vert$. This is seen, for example, by considering a set in the shape of the letter C, where the two ends of the C are close together so that the C looks like an O, and a function which increases gradually as we move from one end of the C to the other. We consider a parameter $\frak{L}(E)$, which measures the distance one has to travel inside $E$ to get from a point $x$ to a point $y$ as compared to the Euclidean distance, this ratio maximized over all pairs of distinct points in $E$. In Proposition \ref{lip grad} we show that $Lip(\psi|_E)\leq \frak{L}(E) \sup_E \left\vert\nabla \psi\right\vert$ and that $\frak{L}(E)$ is the correct coefficient. But this is not enough: we must provide useful bounds on $\frak{L}(E)$, which we do in Proposition \ref{distorted boxy}. We show that whenever $E$ is a certain non-affine deformation of an unconditional convex body (as the inverse image under a continuous coordinate-wise transformation), then $\frak{L}(E)\leq \sqrt{2}$.

Another approach is to find a convex subset $K\subseteq E$ and then apply the bound $Lip(\psi|_K)\leq \sup_K \left\vert\nabla \psi\right\vert\leq \sup_E \left\vert\nabla \psi\right\vert$. Ideally one would want the Gaussian measure of $K$ to be comparable to the Gaussian measure of $E$, in the sense that
\[
\gamma_n\left(\mathbb{R}^n\setminus K\right)\leq C\gamma_n\left(\mathbb{R}^n\setminus C^{-1}E\right)
\]
for a universal constant $C>0$. In Section \ref{mainy resty} we present some very basic observations in this direction, motivated by the methodology in Sections \ref{intro Gauss conc} and \ref{intro non Gauss conc} and of independent interest. We refer the reader to \cite{Tal1, Tal2} for a deeper discussion of the topic of approximating star bodies by convex sets, including various open problems.

\subsection{\label{intro non Gauss conc}Non-Gaussian
concentration}

If $X$ is a random vector in $\mathbb{R}^{n}$ with any distribution $\mu $,
then $X$ has the same distribution as $T(Z)$ for some measurable function $T:\mathbb{R}%
^{n}\rightarrow \mathbb{R}^n$, where $Z$ is a random vector with the standard
normal distribution on $\mathbb{R}^{n}$, denoted $\gamma_n$. One such map is the Kn%
\"{o}the-Rosenblatt rearrangement, see for example \cite{Vil}, and in some cases one can
write down an explicit formula for $T$. For our purposes we may assume that $X=T(Z)$, so $f(X)=(f\circ T)(Z)$. Under fairly general conditions we may then apply Gaussian concentration
to $\psi=f\circ T$ to obtain a concentration inequality for $f(X)$. Let us reiterate this in words: \textit{any function of any random vector has the same distribution as a function of a Gaussian random vector}.

Transportation methods, and a variety of them, are quite standard in the theory of concentration of measure, see for example \cite{BLMass, CatGoGuRo, Gozl, Ledx, Tal3, Tangu}. The above procedure in particular is alluded to in \cite[p. 1046]{Na} in the context where $\mu$ is a Lipschitz image
of the standard Gaussian measure, in anticipation of applying the classical Gaussian concentration inequality for Lipschitz functions. Note that if one can apply Gaussian
concentration to a wider class of functions $\psi$ as in Section \ref{intro
Gauss conc} above, then one may apply Gaussian concentration to a wider class
of non-Gaussian measures. The observations in Section \ref{intro Gauss conc} and
those of this section therefore work particularly well \textit{together} and
each increases the usefulness of the other. This synergy is at
the heart of the paper.

One is thus left with the problem of finding a good choice of $T$. If $\mu $
is an $n$-fold product measure, then the most natural $T$ acts
coordinate-wise in the obvious manner (and is the Kn\"{o}the-Rosenblatt
rearrangement). If $\mu $ is spherically symmetric, then the most natural $T$
acts radially. For most other measures, we expect that the Kn\"{o}%
the-Rosenblatt rearrangement is not a good choice. In the case of
log-concave measures, the Brenier map (see for example \cite{KlKo}) may be
better, although the Brenier map is best for minimizing a transportation
cost, which is not exactly what we want.

\section{\label{heavy tail}Application: Concentration of linear combinations
of heavy tailed random variables}

Concentration of linear combinations of independent random variables is most
classically studied under the assumption of exponential integrability, i.e. $%
\mathbb{E}\exp \left( \varepsilon X_{i}\right) <\infty $ for some $%
\varepsilon >0$. In this context, the exponential moment method plays an
essential role: Using Markov's inequality and independence,%
\begin{eqnarray*}
\mathbb{P}\left\{ \sum_{i=1}^{n}a_{i}X_{i}>t\right\} &=&\mathbb{P}\left\{
\exp \left( \lambda \sum_{i=1}^{n}a_{i}X_{i}\right) >\exp \left( \lambda
t\right) \right\} \leq \exp \left( -\lambda t\right) \mathbb{E}\exp \left(
\lambda \sum_{i=1}^{n}a_{i}X_{i}\right) \\
&=&\exp \left( -\lambda t\right) \prod_{i=1}^{n}\mathbb{E}\exp \left(
\lambda a_{i}X_{i}\right).
\end{eqnarray*}%
The resulting estimate is then optimized over $\lambda >0$ such that $%
\mathbb{E}\exp \left( \lambda a_{i}X_{i}\right) <\infty $ for all $i$.
Outside the realm of exponential integrability (still assuming
independence and linearity), one would estimate power moments and use Markov's inequality,%
\begin{equation}
\mathbb{P}\left\{ \left\vert \sum_{i=1}^{n}a_{i}X_{i}\right\vert >t\right\} =%
\mathbb{P}\left\{ \left\vert \sum_{i=1}^{n}a_{i}X_{i}\right\vert
^{p}>t^{p}\right\} \leq t^{-p}\mathbb{E}\left\vert
\sum_{i=1}^{n}a_{i}X_{i}\right\vert ^{p}.  \label{power moment method}
\end{equation}%
Power moments of sums are more difficult to compute than exponential moments because power functions lack the critical property of the exponential function as used in the exponential moment method. If we assume that each $X_{i}$ has a symmetric distribution and that $%
\mathbb{P}\left\{ \left\vert X_{i}\right\vert \geq t\right\} =\exp \left(
-N_{i}(t)\right) $ for some concave function $N:\left[ 0,\infty \right)
\rightarrow \left[ 0,\infty \right) $, then it was shown in \cite[Theorem 1.1]{HitMont1997} that for
all $p\geq 2$,%
\begin{equation}
\left( \mathbb{E}\left\vert \sum_{i=1}^{n}a_{i}X_{i}\right\vert ^{p}\right)
^{1/p}\leq C\left( \left( \sum_{i=1}^{n}\left\vert a_{i}\right\vert ^{p}%
\mathbb{E}\left\vert X_{i}\right\vert ^{p}\right) ^{1/p}+\sqrt{p}\left(
\sum_{i=1}^{n}\left\vert a_{i}\right\vert ^{2}\mathbb{E}\left\vert
X_{i}\right\vert ^{2}\right) ^{1/2}\right)  \label{HMSO}
\end{equation}%
where $C>0$ is a universal constant, with a corresponding lower bound with $%
C $ replaced by a different constant $c>0$. General upper and lower bounds for the $p^{th}$ moment of a sum of independent random variables were later presented in \cite{Lat}, reducing the problem of computing these moments to that of evaluating a certain Orlicz norm type expression.

In the special case where $%
\left( X_{i}\right) _{1}^{n}$ are i.i.d. symmetric Weibull variables with $\mathbb{P}\left\{ \left\vert X_{i}\right\vert \geq t\right\} =\exp \left(
-t^{q}\right) $, for $0<q\leq 1$, we
show below that in (\ref{HMSO}), $\left\vert a\right\vert _{p}$ can be replaced with $%
\left\vert a\right\vert _{\infty }$ at the cost of $C^{1/q}$, i.e.
\begin{equation}
\left( \mathbb{E}\left\vert
\sum_{i=1}^{n}a_{i}X_{i}\right\vert ^{p}\right) ^{1/p}\leq \left(\frac{C}{q}\right)^{1/q}\left(
p^{1/q}\left\vert a\right\vert _{\infty }+\sqrt{p}\left\vert a\right\vert\right)
\label{replace with infty norm}
\end{equation}%
which by Markov's inequality leads to the tail estimate 
\begin{equation}
\mathbb{P}\left\{ \left\vert \sum_{i=1}^{n}a_iX_{i}\right\vert >t\right\} \leq
2\exp \left( -c_{q}\min \left\{ \left( \frac{t}{\left\vert a\right\vert }%
\right) ^{2},\left( \frac{t}{\left\vert a\right\vert _{\infty }}\right)
^{q}\right\} \right)  \label{Weibul dist for 0<q<=1}
\end{equation}%
where $c_{q}>0$ is made explicit in Theorem \ref{Weibul conc theory}. This is more subtle than observing that the $\ell_p^n$ norm is equivalent to the $\ell_\infty^n$ norm for $p>c\ln n$ and comes down to the geometry of Minkowski sums of the form $uB_{\ell_1}+vB_{\ell_2}$ and the question of when they contain $B_{\ell_r}$ for any given $1<r<2$ ($n$ does not play a role here).

If $X_{1}$ is any random variable and $X_{1}^{\prime }$ is an independent
copy of $X_{1}$, and if $a>0$ is such that $\mathbb{P}\left\{ \left\vert
X_{1}\right\vert >a\right\} \leq 1/2$, then%
\begin{equation*}
\left\{ X_{1}>t+a\right\} \cap \left\{ X_{1}^{\prime }\leq a\right\}
\subseteq \left\{ X_{1}-X_{1}^{\prime }>t\right\} \subseteq \left\{
X_{1}>t/2\right\} \cup \left\{ X_{1}^{\prime }<-t/2\right\}
\end{equation*}%
so by independence and identical distributions,%
\begin{equation}
\frac{1}{2}\mathbb{P}\left\{ X_{1}>t+a\right\} \leq \mathbb{P}\left\{
X_{1}-X_{1}^{\prime }>t\right\} \leq \mathbb{P}\left\{ \left\vert
X_{1}\right\vert >t/2\right\}.  \label{symm prince}
\end{equation}%
The significance is that $X_{1}-X_{1}^{\prime }$ is symmetric. This can be
combined with the following contraction principle, see \cite[Lemma 4.6]%
{LeTal} for a more general version:

\begin{lemma}
\label{contraction principle}Let $\varphi :\left[ 0,\infty \right)
\rightarrow \left[ 0,\infty \right) $ be a convex function, $K_{1}\geq 1$, $%
K_{2}>0$, and let $\left( X_{i}\right) _{1}^{n}$ and $\left( Y_{i}\right)
_{1}^{n}$ each be i.i.d. sequences of symmetric random variables with $%
\mathbb{P}\left\{ \left\vert X_{i}\right\vert >t\right\} \leq K_{1}\mathbb{P}%
\left\{ K_{2}\left\vert Y_{i}\right\vert >t\right\} $ for all $i$ and all $%
t>0$. Then for all $a\in \mathbb{R}^{n}$,%
\begin{equation*}
\mathbb{E}\varphi \left( \left\vert \sum_{i=1}^{n}a_{i}X_{i}\right\vert
\right) \leq \mathbb{E}\varphi \left( K_{1}K_{2}\left\vert
\sum_{i=1}^{n}a_{i}Y_{i}\right\vert \right).
\end{equation*}
\end{lemma}

If we are given an i.i.d. sequence of random variables $\left( X_{i}\right)
_{1}^{n}$ that satisfy a tail bound such as $\mathbb{P}\left\{ \left\vert
X_{i}\right\vert >t\right\} \leq h(t)$, then we consider the symmetrized
sequence $\left( X_{i}-X_{i}^{\prime }\right) _{1}^{n}$ which obeys a
similar tail bound, apply known results for a specific sequence of random
variables $\left( Y_{i}\right) _{1}^{n}$ with similar tails (e.g. Weibull
variables), compare $\mathbb{E}\varphi \left( \left\vert
\sum_{i=1}^{n}a_{i}\left( X_{i}-X_{i}^{\prime }\right) \right\vert \right) $
to $\mathbb{E}\varphi \left( K_{1}K_{2}\left\vert
\sum_{i=1}^{n}a_{i}Y_{i}\right\vert \right) $ using Lemma \ref{contraction
principle}, convert this to a bound on $\mathbb{P}\left\{ \left\vert
\sum_{i=1}^{n}a_{i}\left( X_{i}-X_{i}^{\prime }\right) \right\vert
>t\right\} $ (using say Markov's inequality), and then transfer the result
for $\sum_{i=1}^{n}a_{i}X_{i}-\sum_{i=1}^{n}a_{i}X_{i}^{\prime }$ back to a
bound on $\mathbb{P}\left\{ \left\vert \sum_{i=1}^{n}a_{i}X_{i}\right\vert
>t\right\} $ using (\ref{symm prince}). In this way, estimates such as (\ref%
{Weibul dist for 0<q<=1}) may be extended to the case of tail bounds such as 
$\mathbb{P}\left\{ \left\vert X_{i}\right\vert >t\right\} \leq C\exp \left(
-t^{q}\right) $.

\begin{proof}[Proof of (\protect\ref{replace with infty norm}) and (\protect
\ref{Weibul dist for 0<q<=1})]
By H\"{o}lder's inequality and optimization, for all $p>2$ and $s\in(0,\infty)$,
\begin{eqnarray*}
\left\vert a\right\vert _{p}\leq \left\vert a\right\vert _{\infty
}^{1-2/p}\left\vert a\right\vert ^{2/p}\leq \omega(s)\left\vert a \right\vert_\infty+s\left\vert a \right\vert
\end{eqnarray*}
where
\[
\omega(s)=\left\{
\begin{array}{lll}
(2/p)^{2/(p-2)}(1-2/p)s^{-2/(p-2)} &:& 0<s\leq2/p\\
1-s &:& 2/p<s\leq 1\\
0 &:& s>1
\end{array}.
\right.
\]
To see this, assume momentarily and without loss of generaility that $\left\vert a\right\vert=1$ and optimize $(r^{1-2/p}-s)/r$ over $r\in[0,1]$. Setting $s=2p^{1/2-1/q}$,
\[
\left\vert a\right\vert _{p}\leq\left\{
\begin{array}{lll}
(1-2/p)p^{-(3q-2)/[(p-2)q]}\left\vert a \right\vert_\infty+2p^{1/2-1/q}\left\vert a \right\vert &:& 0< q<2/3\\
\left\vert a \right\vert_\infty+2p^{1/2-1/q}\left\vert a \right\vert &:& 2/3\leq q\leq 1
\end{array}.
\right.
\]
We use this unless $0< q<2/3$ and $2\leq p\leq 3$ (3 is arbitrary), in which case we use $\left\vert a\right\vert _{p}\leq \left\vert a\right\vert$. The result is that $\left\vert a\right\vert_p\leq C^{1/q}\left(\left\vert a\right\vert_\infty+p^{1/2-1/q}\left\vert a\right\vert \right)$, and this clearly holds for $p=2$ as well. (\ref{replace with infty norm}) now follows from (\ref{HMSO}) using $\mathbb{E}\left\vert X_1\right\vert^r=(r/q)\Gamma(r/q)$ and $cs\leq \Gamma(s)^{1/s}\leq Cs$ for all $s\geq 2$. The
probability bound follows by optimizing over $p$. In Case 1 we assume that $%
p^{1/q}\left\vert a\right\vert _{\infty }\leq \sqrt{p}$, and then%
\begin{equation*}
\mathbb{P}\left\{ \left\vert \sum_{i=1}^{n}a_iX_{i}\right\vert >t\right\} \leq
t^{-p}\mathbb{E}\left\vert \sum_{i=1}^{n}a_{i}X_{i}\right\vert ^{p}\leq
\left( \frac{C_{q}\sqrt{p}}{t}\right) ^{p}=\exp \left( -c_{q}t^{2}\right)
\end{equation*}%
for $p=c_{q}t^{2}$. This value of $p$ satisfies the defining inequality of
Case 1 if $t\leq c_{q}\left\vert a\right\vert _{\infty }^{-q/(2-q)}$. In
Case 2 we assume that $p^{1/q}\left\vert a\right\vert _{\infty }\geq \sqrt{p}
$, and then%
\begin{equation*}
\mathbb{P}\left\{ \left\vert \sum_{i=1}^{n}a_iX_{i}\right\vert >t\right\} \leq
t^{-p}\mathbb{E}\left\vert \sum_{i=1}^{n}a_{i}X_{i}\right\vert ^{p}\leq
\left( \frac{C_{q}p^{1/q}\left\vert a\right\vert _{\infty }}{t}\right)
^{p}\leq \exp \left( -C_{q}\frac{t^{q}}{\left\vert a\right\vert _{\infty
}^{q}}\right)
\end{equation*}%
for $p=\left( C_{q}^{-1}\left\vert a\right\vert _{\infty }^{-1}t\right)
^{q}e^{-1}$. This value of $p$ is allowed in Case 2 provided $t\geq
C_{q}\left\vert a\right\vert _{\infty }^{-q/(2-q)}$. For $c_{q}\left\vert
a\right\vert _{\infty }^{-q/(2-q)}\leq t\leq C_{q}\left\vert a\right\vert
_{\infty }^{-q/(2-q)}$, the result follows by adjusting the values of $c_{q}$
and $C_{q}$ and using the fact that the cumulative distribution is
non-decreasing.
\end{proof}

We now present a direct proof of (\ref{Weibul dist for 0<q<=1}) without
using (\ref{replace with infty norm}) or the results of \cite{HitMont1997}. The methodology used is that outlined in Section \ref{methaa} and does not make use of linearity. The result is dimension independent and applies in the infinite dimensional setting with $a\in\ell_2$ (the partial sums are Cauchy in $L^2$).

\begin{theorem}\label{Weibul conc theory}
There exists $C>0$ such that the following is true. Let $n\in \mathbb{N}$, $%
0<q\leq 1$, $a\in \mathbb{R}^{n}$, $a\neq 0$, and $\left( X_{i}\right)
_{1}^{n}$ an i.i.d. sequence of symmetric Weibull random variables with
parameter $q$, i.e. $\mathbb{P}\left\{ \left\vert X_{i}\right\vert
>t\right\} =\exp \left( -t^{q}\right) $, $t\geq 0$. Then for all $t>0$,%
\begin{equation}
\mathbb{P}\left\{ \left\vert \sum_{i=1}^{n}a_iX_{i}\right\vert >t\right\} \leq
C\exp \left( -\min \left\{ C^{-1/q}q^{2/q}\left( \frac{t}{\left\vert
a\right\vert }\right) ^{2},C^{-1}\left( \frac{t}{\left\vert a\right\vert
_{\infty }}\right) ^{q}\right\} \right).\label{devdev}
\end{equation}
\end{theorem}

\begin{proof}
Write $X=\left( F^{-1}\Phi \left( Z_{i}\right) \right) _{i=1}^{n}$, where $%
F(t)=\mathbb{P}\left\{ X_{1}\leq t\right\} $ and $Z$ is a random vector in $%
\mathbb{R}^{n}$ with the standard normal distribution, and define $\psi
\left( x\right) =\sum_{1}^{n}a_{i}F^{-1}\Phi \left( x_{i}\right) $, so that $%
\sum_{1}^{n}a_{i}X_{i}=\psi \left( Z\right) $, and%
\begin{equation*}
\left\vert \nabla \psi \left( x\right) \right\vert =\left(
\sum_{i=1}^{n}\left( \frac{a_{i}\phi \left( \Phi ^{-1}\left( \Phi
x_{i}\right) \right) }{f\left( F^{-1}\left( \Phi x_{i}\right) \right) }%
\right) ^{2}\right) ^{1/2}
\end{equation*}%
where $f=F^{\prime }$. Now, by comparing derivatives and behaviour at
infinity,%
\begin{equation*}
\frac{\phi (t)}{1+t}\leq 1-\Phi \left( t\right) \leq \frac{\phi (t)}{t}:t>0
\end{equation*}%
which implies that for $1/2\leq t<1$, $\phi \left( \Phi ^{-1}\left( t\right)
\right) \leq \left( 1+\Phi ^{-1}\left( t\right) \right) \left( 1-t\right) $.
Now $1-\Phi \left( t\right) \leq \phi (t)$ for $t\geq 1$, so for $1/2\leq
t<1 $ we have (taking $\max =1$ when second argument undefined), 
\begin{equation*}
1+\Phi ^{-1}\left( t\right) \leq 1+\max \left\{ 1,\sqrt{2\ln \frac{1}{\sqrt{%
2\pi }\left( 1-t\right) }}\right\}.
\end{equation*}%
By direct computation, for the same range of $t$,%
\begin{equation*}
f\left( F^{-1}\left( t\right) \right) =q\left( 1-t\right) \left( \ln \frac{1%
}{2\left( 1-t\right) }\right) ^{-\left( -1+1/q\right) }
\end{equation*}%
so%
\begin{equation*}
\left( \frac{\phi \left( \Phi ^{-1}\left( t\right) \right) }{f\left(
F^{-1}\left( t\right) \right) }\right) ^{2}\leq q^{-2}\left( A\Phi
^{-1}\left( t\right) +B\right) ^{-2+4/q}
\end{equation*}%
for universal constants $A,B>0$, and%
\begin{eqnarray}
\left\vert \nabla \psi \left( x\right) \right\vert &=&\left(
\sum_{i=1}^{n}\left( \frac{a_{i}\phi \left( \Phi ^{-1}\left( \Phi \left\vert
x_{i}\right\vert \right) \right) }{f\left( F^{-1}\left( \Phi \left\vert
x_{i}\right\vert \right) \right) }\right) ^{2}\right) ^{1/2}\nonumber\\
&\leq &q^{-1}\left( \sum_{i=1}^{n}a_{i}^{2}\left( A\left\vert
x_{i}\right\vert +B\right) ^{-2+4/q}\right) ^{1/2}. \label{grad bound in spec case}
\end{eqnarray}%
The function%
\begin{equation*}
\left] x\right[ =\left( \sum_{i=1}^{n}a_{i}^{2}\left\vert x_{i}\right\vert
^{-2+4/q}\right) ^{q/(4-2q)}
\end{equation*}%
is a norm, so%
\begin{eqnarray*}
Lip\left( \left] \cdot \right[ \right) &=&\sup \left\{ \left] \theta \right[
:\theta \in S^{n-1}\right\} \\
&=&\sup \left\{ \left( \sum_{i=1}^{n}a_{i}^{2}\left\vert x_{i}\right\vert
^{-1+2/q}\right) ^{q/(4-2q)}:x_{i}\geq 0,\sum_{i=1}^{n}x_{i}=1\right\} \\
&=&\left\vert a\right\vert _{\infty }^{q/(2-q)}.
\end{eqnarray*}%
The Lipschitz constant of $x\mapsto \left] \left( A\left\vert
x_{i}\right\vert +B\right) _{1}^{n}\right[ $ is therefore at most $%
A\left\vert a\right\vert _{\infty }^{q/(2-q)}$, and by classical Gaussian
concentration applied to this function, with probability at least $1-C\exp
\left( -c\lambda ^{2}\right) $,%
\begin{eqnarray*}
&&\left( \sum_{i=1}^{n}a_{i}^{2}\left( A\left\vert Z_{i}\right\vert
+B\right) ^{-2+4/q}\right) ^{q/(4-2q)} \\
&\leq &\mathbb{E}\left( \sum_{i=1}^{n}a_{i}^{2}\left( A\left\vert
Z_{i}\right\vert +B\right) ^{-2+4/q}\right) ^{q/(4-2q)}+A\left\vert
a\right\vert _{\infty }^{q/(2-q)}\lambda \\
&\leq &\left( \sum_{i=1}^{n}a_{i}^{2}\mathbb{E}\left( A\left\vert
Z_{1}\right\vert +B\right) ^{-2+4/q}\right) ^{q/(4-2q)}+A\left\vert
a\right\vert _{\infty }^{q/(2-q)}\lambda \\
&=&\left( \mathbb{E}\left( A\left\vert Z_{1}\right\vert +B\right)
^{-2+4/q}\right) ^{q/(4-2q)}\left\vert a\right\vert ^{q/(2-q)}+A\left\vert
a\right\vert _{\infty }^{q/(2-q)}\lambda
\end{eqnarray*}%
i.e.%
\begin{eqnarray}
&&\left( \sum_{i=1}^{n}a_{i}^{2}\left( A\left\vert Z_{i}\right\vert
+B\right) ^{-2+4/q}\right) ^{1/2}\nonumber \\
&\leq &\left[ \left( \mathbb{E}\left( A\left\vert Z_{1}\right\vert +B\right)
^{-2+4/q}\right) ^{q/(4-2q)}\left\vert a\right\vert ^{q/(2-q)}+A\left\vert
a\right\vert _{\infty }^{q/(2-q)}\lambda \right] ^{-1+2/q}.\label{noname123}
\end{eqnarray}%
Define
\begin{eqnarray*}
K&=&\left\{ x\in \mathbb{R}^{n}:q^{-1}\left( \sum_{i=1}^{n}a_{i}^{2}\left( A\left\vert
x_{i}\right\vert +B\right) ^{-2+4/q}\right) ^{1/2} \leq R\right\}
\end{eqnarray*}
where%
\begin{eqnarray}
R &=&q^{-1}\left[ \left( \mathbb{E}\left( A\left\vert Z_{1}\right\vert
+B\right) ^{-2+4/q}\right) ^{q/(4-2q)}\left\vert a\right\vert
^{q/(2-q)}+A\left\vert a\right\vert _{\infty }^{q/(2-q)}\lambda \right]
^{-1+2/q}\nonumber \\
&\leq &q^{-1}2^{-1+2/q}\left( \mathbb{E}\left( A\left\vert Z_{1}\right\vert
+B\right) ^{-2+4/q}\right) ^{1/2}\left\vert a\right\vert
+q^{-1}(2A)^{-1+2/q}\left\vert a\right\vert _{\infty }\lambda ^{-1+2/q}\nonumber \\
&\leq &C_{2}^{1/q}\left( 1/q\right) ^{1/q}\left\vert a\right\vert
+q^{-1}(2A)^{-1+2/q}\left\vert a\right\vert _{\infty }\lambda ^{-1+2/q}.\label{R lambda}
\end{eqnarray}%
By (\ref{noname123}) and its probability bound, $\gamma _{n}\left( K\right) \geq 1-C\exp \left( -c\lambda
^{2}\right)$. Since $-2+4/q\geq 2$ the function $x\mapsto \sum_{i=1}^{n}a_{i}^{2}\left( A\left\vert
x_{i}\right\vert +B\right) ^{-2+4/q}$ is the sum of convex functions and is therefore itself convex, which implies that $K$ is convex. Since $K$ is convex and, by (\ref{grad bound in spec case}), $\left\vert \nabla \psi \left( x\right)
\right\vert \leq R$ on $K$, the restricted function $\psi |_{K}$ is Lipschitz, with corresponding Lipschitz constant at most $R$.
Let $\psi ^{\sharp }$ denote a Lipschitz extension of the
restriction $\psi |_{K}$ such that $Lip\left( \psi ^{\sharp }\right) \leq R$%
. The existence of extensions of real valued Lipschitz
functions preserving the Lipschitz constant is a basic result in the theory of metric spaces, see e.g. \cite{Matt}. Applying Gaussian concentration to $\psi ^{\sharp }$,%
\begin{equation*}
\mathbb{P}\left\{ \left\vert \psi ^{\sharp }\left( Z\right) -\mathbb{M}\psi
^{\sharp }\left( Z\right) \right\vert >\lambda R\right\} \leq C\exp \left(
-c\lambda ^{2}\right)
\end{equation*}%
yet $\mathbb{P}\left\{ \psi ^{\sharp }\left( Z\right) \neq \psi \left(
Z\right) \right\} \leq C\exp \left( -c\lambda ^{2}\right) $ so $\mathbb{P}%
\left\{ \left\vert \psi \left( Z\right) -\mathbb{M}\psi ^{\sharp }\left(
Z\right) \right\vert >\lambda R\right\} \leq 2C\exp \left( -c\lambda
^{2}\right) $. Assuming that this probability bound is less than $1/2$,
\begin{eqnarray*}
\mathbb{P}%
\left\{  \psi \left( Z\right) <\mathbb{M}\psi ^{\sharp }\left(
Z\right) -\lambda R\right\}<1/2\\
\mathbb{P}%
\left\{  \psi \left( Z\right) >\mathbb{M}\psi ^{\sharp }\left(
Z\right) +\lambda R\right\}<1/2
\end{eqnarray*}
which implies that $\left\vert \mathbb{M}\psi \left( Z\right) -\mathbb{M}\psi ^{\sharp }\left(
Z\right) \right\vert\leq\lambda R$. The bound for concentration of $\psi\left(Z\right)$ around $\mathbb{M}\psi ^{\sharp }\left(
Z\right)$ can now be written as a concentration inequality around $ \mathbb{M}\psi \left( Z\right)$, i.e.
\[
\mathbb{P}%
\left\{ \left\vert \psi \left( Z\right) -\mathbb{M}\psi\left(
Z\right) \right\vert >2\lambda R\right\} \leq 2C\exp \left( -c\lambda
^{2}\right).
\]
By symmetry, $\mathbb{M}\psi \left( Z\right) =0$. Now, remembering that $R$ is bounded in terms of $\lambda $, see (\ref{R lambda}), set $%
t=\lambda R$ and estimate $\lambda $ in terms of $t$ using Lemma \ref{lil
alg}. By increasing the value of $C$ in the probability bound as stated in the theorem, this probability bound becomes trivial unless the probability bound above is less than $1/2$ as required.
\end{proof}

\section{\label{mainy resty}Beyond convexity}

Here we elaborate on the discussion in Section \ref{non-convex}.

\subsection{The parameter $\frak{L}(\cdot)$}

For any set $A\subseteq \mathbb{R}^{n}$ containing at least two points, let
\[
\mathfrak{L}(A)=\sup\left\{\frac{\inf\left\{\textrm{Var}(f):f\in C([0,1],A),f(0)=x,f(1)=y \right\}}{\left\vert x-y \right\vert}:x,y\in A,x\neq y\right\}
\]
where $C([0,1],A)$ is the collection of all continuous functions $f:[0,1]\rightarrow A$ and $\textrm{Var}(f)$ is the total variation of $f$,
\[
\textrm{Var}(f)=\sup\left\{\sum_{i=1}^N\left\vert f(t_i)-f(t_{i-1})\right\vert:N\in\mathbb{N},0=t_0<t_1<... <t_N=1 \right\}.
\]
Note that the numerator in the definition of $\mathfrak{L}(A)$ is a sort of geodesic distance. We assume the usual convention that $\inf\emptyset=\infty$, so unless $A$ is path connected, $\mathfrak{L}(A)=\infty$. If $A$ contains fewer than two points, set $\mathfrak{L}(A)=1$.

Whenever $A$ is convex, $%
\mathfrak{L}(A)=1$. Conversely,

\begin{proposition}
 If $A\subseteq\mathbb{R}^n$ is closed and $\mathfrak{L}(A)=1$ then $A$ is convex. 
\end{proposition}

\begin{proof}
Consider any $x,y\in A$ with $x\neq y$, any $\lambda\in (0,1)$ and any $f\in C([0,1],A)$ such that $f(0)=x$, $f(1)=y$. By continuity there exists $t\in(0,1)$ and $\theta \perp (y-x)$ such that $f(t)=\lambda x+(1-\lambda)y+\theta$. Now $f(t)\in A\cap B(\lambda x+(1-\lambda)y,\left\vert\theta\right\vert)$ and
\begin{eqnarray*}
\textrm{Var}(f) &\geq& \sqrt{(1-\lambda)^2\left\vert x-y\right\vert^2+\left\vert\theta\right\vert^2}+\sqrt{\lambda^2\left\vert x-y\right\vert^2+\left\vert\theta\right\vert^2}
\end{eqnarray*}
and the result follows by taking $\textrm{Var}(f)\rightarrow \left\vert x-y\right\vert$ and using the fact that $A$ is closed.
\end{proof}

Let $Lip\left( \psi,x\right)$ denote the local Lipschitz constant of a
function $\psi:A\rightarrow \mathbb{R}$ around a point $x\in A$,%
\begin{equation}
Lip\left( \psi,x\right) =\lim_{\varepsilon \rightarrow 0^{+}}Lip\left(
\psi|_{B\left( x,\varepsilon \right) \cap A}\right)  \label{loc lip def}
\end{equation}%
where $B\left( x,\varepsilon \right)$ denotes the Euclidean ball centred at $x$ of radius $\varepsilon$. When $A$ contains a neighbourhood of $x\in\mathbb{R}^n$ and $\nabla\psi$ is continuous at $x$, then $Lip\left( \psi,x\right)=\left\vert\nabla\psi(x)\right\vert$. Our main reason for defining $\mathfrak{L}(\cdot )$ is the following
observation.

\begin{proposition}
\label{lip grad}For any non-empty set $A\subseteq \mathbb{R}^{n}$ with $\mathfrak{L}(A)<\infty $ and
any function $\psi:A\rightarrow \mathbb{R}$,%
\begin{equation}
Lip\left( \psi\right) \leq \mathfrak{L}(A)\sup \left\{ Lip\left( \psi,x\right)
:x\in A\right\}.\label{lip bound by local}
\end{equation}%
If, furthermore, $A$ contains at least two points and is locally convex in the sense that for all $x\in A$
there exists $\varepsilon >0$ such that $B\left( x,\varepsilon \right) \cap
A $ is convex, then 
\begin{equation*}
\mathfrak{L}(A)=\sup_{\psi:A\rightarrow \mathbb{R}}\left\{ \frac{Lip\left(
\psi\right) }{\sup \left\{ Lip\left( \psi,x\right) :x\in A\right\} }:0<Lip\left(
\psi\right) <\infty \right\}.
\end{equation*}
\end{proposition}

\begin{proof}
We start with (\ref{lip bound by local}). Let $L=\sup \left\{ Lip\left( \psi,x\right)
:x\in A\right\}$. Consider any $x,y\in A$, any $\varepsilon>0$, and any continuous path $f:[0,1]\rightarrow A$ between $x$ and $y$. It follows from the definition of $L$ and of the local Lipschitz constant that for each $z\in f([0,1])$ there exists a nonempty open ball $B_z$ centered at $z$ with $Lip\left(\psi|_{B_z}\right)<L+\varepsilon$. Let $\mathcal{I}$ be the collection of all connected components of inverse images of these balls (each will be an interval that is open in the subspace topology of $[0,1]$). The elements of $\mathcal{I}$ cover $[0,1]$ so by compactness there is a finite subcover $\mathcal{I}^*\subseteq\mathcal{I}$. After removing unnecessary elements of $\mathcal{I}^*$ one by one, we arrive at a minimal subcover $\mathcal{I}^{**}$. The elements of $\mathcal{I}^{**}$ can then be ordered as follows: $I_1< I_2$ if there exists $t_1\in I_1$ such that for all $t_2\in I_2$, $t_1<t_2$, and $I_1\leq I_2$ if either $I_1<I_2$ or $I_1=I_2$. By minimality this is a linear ordering. The elements of $\mathcal{I}^{**}$ can then be labelled $I_1, I_2... I_N$, and we write $I_1=[a_1,b_1)$, $I_i=(a_i,b_i)$ for $2\leq i\leq N-1$, and $I_N=(a_N,b_N]$, with $a_1=0$ and $b_N=1$. We assume that $N\geq 3$ and leave the simpler case $N\in\{1,2\}$ to the reader. Now $a_{i+1}<b_i$ for all $1\leq i\leq N-1$, otherwise $b_i\notin\cup I_j$, so there exists $c_i\in (a_{i+1},b_i)$. By minimality, $b_{i-1}<a_{i+1}$ for $2\leq i \leq N-1$ (otherwise $I_i\subseteq I_{i-1}\cup I_{i+1}$). So, setting $c_0=0$ and $c_N=1$, the sequence $(c_i)_0^{N}$ is strictly increasing. It also follows that for all $1\leq i \leq N$, $[c_{i-1},c_i]\subseteq I_i$, and from the construction of $\mathcal{I}$ and $\mathcal{I}^{**}$, that $\psi$ is $(L+\varepsilon)$-Lipschitz on each $f(I_i)$. This implies that $\left\vert \psi(f(c_i))-\psi(f(c_{i-1}))\right\vert \leq (L+\varepsilon)\left\vert f(c_i)-f(c_{i-1})\right\vert$, and by the triangle inequality $\left\vert \psi(f(0))-\psi(f(1))\right\vert\leq (L+\varepsilon)\sum_{i=1}^N \left\vert f(c_{i-1})-f(c_i)\right\vert\leq (L+\varepsilon)\textrm{Var}(f)$.  We now choose $f$ such that $\textrm{Var}(f)\leq (\mathfrak{L}(A)+\varepsilon)\left\vert x-y\right\vert$ and we let $\varepsilon\rightarrow0$. This implies that $\left\vert \psi(x)-\psi(y)\right\vert \leq \mathfrak{L}(A)L\left\vert x-y\right\vert$, which implies (\ref{lip bound by local}). This also implies a lower bound for $\mathfrak{L}(A)$.

We now prove the second part
under the assumption that $A$ is locally convex, as defined in the statement
of the theorem. For any $x,y\in A$ let
\[
\rho (x,y)=\inf\left\{\textrm{Var}(f):f\in C([0,1],A),f(0)=x,f(1)=y \right\}
\]
It is
easily seen that $\rho $ is a metric on $A$. Now consider any $\varepsilon
>0 $. It follows from the definition of $\mathfrak{L}%
(A) $, that there exist $x,y\in A$ such that $\rho (x,y)>\left( \mathfrak{L}%
(A)-\varepsilon \right) \left\vert x-y\right\vert $. Now consider the
function $g:A\rightarrow \lbrack 0,\infty )$ defined as $g(z)=\rho (x,z)$,
for which $g(x)=0$, $g(y)=\rho (x,y)$, so $Lip(g)>\mathfrak{L}%
(A)-\varepsilon $. It follows from the definition of $\rho $, the triangle
inequality, and the assumption of local convexity that $g$ is locally $1$%
-Lipschitz. This shows that%
\begin{equation*}
\mathfrak{L}(A)<\frac{Lip(g)}{\sup \left\{ Lip\left( g,x\right) :x\in
A\right\} }+\varepsilon
\end{equation*}%
and the result follows by sending $\varepsilon \rightarrow 0$.
\end{proof}

Consider the case where $A$ is a non-affine deformation of a
convex body $K$, as the inverse image under the action of some continuous map $T:%
\mathbb{R}^{n}\rightarrow \mathbb{R}^{n}$. When $K$ is $1$-unconditional
(i.e. invariant under coordinate reflections) and $T$ acts coordinatewise
and monotonically, then $\mathfrak{L}(A)\leq \sqrt{2}$. So, for example, $\mathfrak{L}(B_p^n)\leq \sqrt{2}$ for all $p\in(0,1)$, where
\[
B_p^n=\left\{x\in \mathbb{R}^n:\sum_{i=1}^n \left\vert x_i \right\vert^p \leq1\right\}.
\]
The general case is covered by the following proposition.

\begin{proposition}
\label{distorted boxy}Let $n\in \mathbb{N}$ and let $\Upsilon _{n}$ denote
the collection of all $K\subseteq \mathbb{R}^{n}$ with the following
property: there exists $a\in \left[ -\infty ,\infty \right] ^{n}$ (depending
on $K$) such that if $x\in K$ and $y\in \mathbb{R}^{n}$, and each coordinate
of $y$ is between the corresponding coordinates of $x$ and $a$ in the
non-strict sense (i.e. either $a_{i}\leq y_{i}\leq x_{i}$ or $x_{i}\leq
y_{i}\leq a_{i}$), then $y\in K$. For each $1\leq i\leq n$, let $h_{i}:%
\mathbb{R}\rightarrow \mathbb{R}$ be a continuous non-decreasing function, and let $T:%
\mathbb{R}^{n}\rightarrow \mathbb{R}^{n}$ be defined as $%
Tx=(h_{i}(x_{i}))_{1}^{n}$. Then for any $K\in \Upsilon _{n}$, $\mathfrak{L}%
\left( K\right) \leq \sqrt{2}$ and $T^{-1}K\in \Upsilon _{n}$, and so $%
\mathfrak{L}\left( T^{-1}(K)\right) \leq \sqrt{2}$.
\end{proposition}

\begin{proof}
Consider any $K\in \Upsilon _{n}$. We first show that $T^{-1}K\in \Upsilon
_{n}$. Since $K\in \Upsilon _{n}$ $\exists a\in \left[ -\infty ,\infty %
\right] ^{n}$ as in the statement of the theorem. For each $1\leq i\leq n$,
since $h_{i}$ is non-decreasing, there exists $b_{i}\in \left[ -\infty
,\infty \right] $ such that for all $t\in \mathbb{R}$, if $t\leq b_{i}$,
then $h_{i}(t)\leq a_{i}$, and if $t\geq b_{i}$ then $h_{i}(t)\geq a_{i}$
(consider three cases: $a_{i}$ is an upper bound for $range\left(
h_{i}\right) $, $a_{i}$ is a lower bound, or neither). Now consider any $%
x\in T^{-1}\left( K\right) $ and $y\in \mathbb{R}^{n}$, such that the
coordinates of $y$ are between the corresponding coordinates of $x$ and
those of $b$ (always meant in the non-strict sense). By the fact that the $%
h_{i}$ are non-decreasing, and by construction of $b$, it follows that the
coordinates of $Ty$ are all between the coordinates of $Tx\in K$ and those of $%
a $. Since $K\in \Upsilon _{n}$, what we have just shown implies that $Ty\in
K$, and therefore $y\in T^{-1}K$. This shows that $T^{-1}K$ satisfies the
defining property of $\Upsilon _{n}$. We now show that $\mathfrak{L}(K)\leq 
\sqrt{2}$. If $K$ is empty, or a singleton, then $\mathfrak{L}\left(
K\right) =1$, and we may assume without loss of generality that $\left\vert
K\right\vert \geq 2$. Consider any $x,y\in K$ with $x\neq y$. Now define $%
z\in \mathbb{R}^{n}$ as follows. If $a_{i}$ is between $x_{i}$ and $y_{i}$
(which is only possible if $a_{i}\in \mathbb{R}$), then set $z_{i}=a_{i}$
(and let the collection of all such $i$ be denoted $E$), otherwise let $%
z_{i} $ be the element of the set $\left\{ x_{i},y_{i}\right\} $ that is
closest to $a_{i}$, with the obvious interpretation when $a_{i}\in \left\{
\pm \infty \right\} $. For all $\lambda \in \left[ 0,1\right] $ and all $%
1\leq i\leq n$, $\lambda z_{i}+\left( 1-\lambda \right) x_{i}$ is between $%
a_{i}$ and $x_{i}$, and therefore $\lambda z+\left( 1-\lambda \right) x\in K$%
. Similarly, $\lambda y+\left( 1-\lambda \right) z\in K$, and this defines a
polygonal path of length $\left\vert x-z\right\vert +\left\vert
y-z\right\vert $ in $K$ from $x$ to $y$. Furthermore,%
\begin{equation*}
\left\langle x-z,y-z\right\rangle =\sum_{i\in E}\left( x_{i}-a_{i}\right)
\left( y_{i}-a_{i}\right) \leq 0.
\end{equation*}%
Using this inequality and comparing the $\ell _{2}^{2}$ and $\ell _{1}^{2}$
norms,%
\begin{equation*}
\left\vert x-y\right\vert ^{2}=\left\vert x-z\right\vert ^{2}+\left\vert
y-z\right\vert ^{2}-2\left\langle x-z,y-z\right\rangle \geq \frac{1}{2}%
\left( \left\vert x-z\right\vert +\left\vert y-z\right\vert \right) ^{2}
\end{equation*}%
and it follows that $\mathfrak{L}(K)\leq \sqrt{2}$.
\end{proof}

For a function $\psi:\mathbb{R}^{n}\rightarrow \mathbb{R}$
define%
\begin{equation}
\mathfrak{L}(\psi)=\sup_{t\in \mathbb{R}}\mathfrak{L}\left( \left\{\psi\leq t\right\} \right).  \label{LLL def}
\end{equation}%
The following result summarizes the methodology outlined in Section \ref{methaa}.

\begin{theorem}
\label{main zeroth}Let $n\in \mathbb{N}$, $A>0$, let $\mu $ be a probability
measure on $\mathbb{R}^{n}$, and let $T:\mathbb{R}^{n}\rightarrow \mathbb{R}%
^{n}$ and $f ,Q:\mathbb{R}^{n}\rightarrow \mathbb{R}$ be measurable
functions such that $\mu =T\gamma _{n}$, and such that $%
f \circ T$ is locally Lipschitz with%
\begin{equation*}
Q\left( x\right) \geq Lip\left(f \circ T,x\right)
\end{equation*}%
for all $x\in \mathbb{R}^{n}$. Let $X$ and $Z$ be random vectors in $\mathbb{%
R}^{n}$, where the distribution of $X$ is $\mu $, and $Z$ follows the
standard normal distribution. Let $R>0$ and $t>\Phi ^{-1}\left( 1-\left(
2A+4\right) ^{-1}\right) $ be such that $\mathbb{P}\left\{ Q(Z)>R\right\}
\leq A\left( 1-\Phi (t)\right) $. Then%
\begin{equation*}
\mathbb{P}\left\{ \left\vert f (X)-\mathbb{M}f (X)\right\vert >2%
\mathfrak{L}(Q)Rt\right\} \leq \left( A+2\right) \left( 1-\Phi (t)\right)
\end{equation*}%
where $\mathfrak{L}(Q)$ is defined by (\ref{LLL def}).
\end{theorem}

\begin{proof}[Proof of Theorem \protect\ref{main zeroth}]
Since the distribution of the random vector $T(Z)$ is $\mu $, we may assume
without loss of generality that $X=T(Z)$. Set $\psi=f \circ T$%
, in which case $\psi(Z)=f (X)$. Let $K=\left\{ x:Q(x)\leq
R\right\} $. By assumption, $\mathbb{P}\left\{ Z\in K\right\} \geq1-A\left(
1-\Phi (t)\right) $, and for all $x\in K$, $Lip\left( \psi|_{K},x\right) \leq Lip\left( \psi,x\right) \leq R$. By
Proposition \ref{lip grad}, $Lip\left( \psi|_{K}\right) \leq 
\mathcal{L}\left( Q\right) R$. As in the proof of Theorem \ref{Weibul conc theory}, the function $\psi|_{K}$ may
be extended to a function $\widetilde{\psi}:\mathbb{R}^{n}\rightarrow 
\mathbb{R}$ such that $Lip\left( \widetilde{\psi}\right) =Lip\left( \psi|_{K}\right) $. By Gaussian concentration of Lipschitz functions and the union
bound, it follows that with probability at least $1-\left( A+2\right) \left(
1-\Phi (t)\right) $, $\psi(Z)=\widetilde{\psi}(Z)$, and $%
\left\vert \widetilde{\psi}(Z)-\mathbb{M}\widetilde{\psi}(Z)\right\vert
\leq \mathcal{L}\left( Q\right) Rt$. Since $\left( A+2\right) \left( 1-\Phi
(t)\right) <1/2$, this implies that greater than 50\% of the mass of the
distribution of $\psi(Z)$ lies in the closed interval from $%
\mathbb{M}\widetilde{\psi}(Z)-\mathcal{L}\left( Q\right) Rt$ to $\mathbb{M}%
\widetilde{\psi}(Z)+\mathcal{L}\left( Q\right) Rt$,
so $\left\vert \mathbb{M}\psi(Z)-\mathbb{M}\widetilde{\psi}%
(Z)\right\vert \leq \mathcal{L}\left( Q\right) Rt$. The result now follows
by the triangle inequality.
\end{proof}

\subsection{Convex subsets with large Gaussian measure}

The relevance of the following result is that, in the notation there, $\lambda\sqrt{n}B_2^n$ is an approximating ellipsoid for the possibly non-convex Orlicz ball $\lambda B_F$, in the sense of the Gaussian measure of the complement. The result applies more generally with essentially the same proof; we state it as is for simplicity.
\begin{proposition}\label{Orlicz approx}
Let $F:[0,\infty)\rightarrow[0,\infty)$ be strictly increasing and concave, such that $F(0)=0$, $F(1)=1$ and $\lim_{x\rightarrow\infty}F(x)=\infty$. Let $n\in\mathbb{N}$ and define
\[
B_F=\left\{x\in\mathbb{R}^n:\sum_{i=1}^nF\left(\left\vert x_i \right\vert\right)\leq n\right\}.
\]
Then for all $\lambda>1$ we have $\lambda\sqrt{n}B_2^n\subseteq \lambda B_F$ and
\[
\gamma_n\left(\mathbb{R}^n\setminus C\lambda\sqrt{n}B_2^n\right)\leq \gamma_n\left(\mathbb{R}^n\setminus \lambda B_F\right)
\]
where $C>1$ is a universal constant.
\end{proposition}

\begin{proof}
By convexity there exists $m\in(0,1)$ such that $F(s)\leq m(s-1)+1$ for all $s\in[0,\infty)$. Therefore, for any $x\in\sqrt{n}B_2^n$,
\begin{eqnarray*}
\sum_{i=1}^nF\left(\left\vert x_i \right\vert\right)\leq \sum_{i=1}^n\left(m(\left\vert x_i \right\vert-1)+1\right)\leq m\sqrt{n}\left\vert x \right\vert+n(1-m)\leq n
\end{eqnarray*}
so $\lambda\sqrt{n}B_2^n\subseteq \lambda B_F$ as claimed. By polar integration and using the tangent line approximation to a concave function,
\begin{eqnarray*}
\gamma_n\left(\mathbb{R}^n\setminus C\lambda\sqrt{n}B_2^n\right)&=&(2\pi)^{-n/2}n\cdot\textrm{vol}\left(B_2^n\right)\int_{C\lambda\sqrt{n}}^\infty e^{-r^2/2+(n-1)\ln r}dr\\
&\leq&\left(\frac{n}{2\pi}\right)^{n/2}\textrm{vol}\left(B_2^n\right)C^{n-2}\lambda^{n-2}\exp\left(\frac{-C^2\lambda^2n}{2}\right).
\end{eqnarray*}
On the other hand, since $F$ is strictly increasing, $\lambda B_F\subseteq \left\{x\in\mathbb{R}^n:\exists i,  \left\vert x_i \right\vert \leq \lambda\right\}$ so using the fact that the tails of the standard normal density are convex and therefore lie above their tangent lines,
\begin{eqnarray*}
\gamma_n\left(\mathbb{R}^n\setminus \lambda B_F\right)&\geq&\left(2\int_\lambda^\infty\frac{1}{\sqrt{2\pi}}e^{-t^2/2}dt\right)^n\geq \left(\frac{1}{\sqrt{2\pi}\lambda}e^{-\lambda^2/2}\right)^n
\end{eqnarray*}
and the result follows by taking $C$ large enough.
\end{proof}

For general star bodies it may not be as easy to find approximating convex subsets as in Proposition \ref{Orlicz approx}, although as $\lambda\rightarrow \infty$ this is straightforward. Let $\sigma_{n-1}$ denote Haar measure on $S^{n-1}$, normalized so that $\sigma_{n-1}\left(S^{n-1}\right)=1$.

\begin{proposition}\label{asy decay}
Let $n\in\mathbb{N}$ with $n\geq 2$, let $\left\Vert \cdot \right\Vert:S^{n-1}\rightarrow (0,\infty)$ be any bounded Borel measurable function, let $b=\sup(\left\Vert \cdot \right\Vert)$, and let $K$ be the unit ball of the homogeneous extension of $\left\Vert \cdot \right\Vert$, i.e.
\[
K=\{0\}\cup\left\{x\in\mathbb{R}^n\setminus\{0\}:\left\vert x \right\vert\left\Vert\frac{x}{\left\vert x \right\vert}\right\Vert\leq 1\right\}.
\]
For simplicity, assume that the $L^\infty$ norm of $\left\Vert \cdot \right\Vert$ (i.e. its essential supremum) is also $b$. Then for all $\lambda \geq\sqrt{2(n-1)}b$ and all $\delta\in(0,1)$,
\[
\gamma_n\left(\mathbb{R}^n\setminus R B_2^n\right)\leq \gamma_n\left(\mathbb{R}^n\setminus\lambda K\right)
\]
where
\[
R=\frac{\lambda}{(1-\delta)b}+\frac{2(1-\delta)b}{\lambda}\ln\left[\frac{4}{\sigma_{n-1}\left(\left\{\left\Vert \theta \right\Vert\geq (1-\delta)b\right\}\right)}\right].
\]
\end{proposition}

\begin{proof}
By polar integration, and using $t^{n-1}\geq r^{n-1}$ for $t\geq r\geq0$ and the standard normal tail bound from the proof of Proposition \ref{Orlicz approx}, and since $\lambda>b$, $\gamma_n\left(\mathbb{R}^n\setminus \lambda K\right)$ is bounded below by
\begin{eqnarray}
&&\frac{n\cdot\textrm{vol}\left(B_2^n\right)}{2(2\pi)^{n/2}}\int_{S^{n-1}}  \left(\frac{\lambda}{\left\Vert \theta \right\Vert}\right)^{n-2}\exp\left(-\frac{1}{2}\left(\frac{\lambda}{\left\Vert \theta \right\Vert}\right)^2\right)d\sigma_{n-1}(\theta)\nonumber\\
&\geq&\sigma_{n-1}\left(\left\{\left\Vert \theta \right\Vert\geq (1-\delta)b\right\}\right)\frac{n\cdot\textrm{vol}\left(B_2^n\right)}{2(2\pi)^{n/2}}r^{n-2}e^{-r^2/2}\label{integral measure bundis}
\end{eqnarray}
where $r=(1-\delta)^{-1}b^{-1}\lambda$. The inequality on the second line holds since $\lambda>b\sqrt{n-2}$ and $t^{n-2}e^{-t^2/2}$ is decreasing for $t\geq\sqrt{n-2}$. Since $R>r$, by log-concavity,
\begin{equation}
R^{n-2}e^{-R^2/2}\leq r^{n-2}e^{-r^2/2}\exp\left[-(R-r)\left(r-\frac{n-2}{r}\right)\right]\label{log concaviae}
\end{equation}
and it follows from the definition of $R$ that
\begin{equation}
\exp\left[-(R-r)\left(r-\frac{n-2}{r}\right)\right]\leq\frac{1}{4}\sigma_{n-1}\left(\left\{\left\Vert \theta \right\Vert\geq (1-\delta)b\right\}\right).\label{from R}
\end{equation}
It follows from (\ref{log concaviae}) and (\ref{from R}) that the quantity in (\ref{integral measure bundis}) is bounded below by
\[
\frac{2n\cdot\textrm{vol}\left(B_2^n\right)}{(2\pi)^{n/2}}R^{n-2}e^{-R^2/2}.
\]
Since $R\geq\sqrt{2(n-1)}$, it follows by polar integration and the upper bound for a standard normal tail based on log-concavity that this is bounded below by $\gamma_n\left(\mathbb{R}^n\setminus R B_2^n\right)$.
\end{proof}

\section*{Acknowledgements}

Thanks to Gusti van Zyl, Ramon van Handel and the anonymous referees for various comments and suggestions. Part of this work was done while the author was a postdoctoral
fellow at the Weizmann Institute of Science.

\end{document}